\let\@@pmod\mod
\DeclareRobustCommand{\mod}{\@ifstar\@pmods\@@pmod}
\def\@pmods#1{\mkern4mu({\operator@font mod}\mkern 6mu#1)}
\definecolor{blue}{rgb}{0,0,1}
\definecolor{red}{rgb}{1,0,0}
\definecolor{green}{rgb}{0,.6,.2}
\definecolor{purple}{rgb}{1,0,1}
\long\def\red#1\endred{\textcolor{red}{#1}}
\long\def\blue#1\endblue{\textcolor{blue}{#1}}
\long\def\purple#1\endpurple{\textcolor{purple}{ #1}}
\long\def\green#1\endgreen{\textcolor{green}{#1}}
\newcommand{\ph}{\varphi}
\newcommand{\scrG}{\mathcal{G}}
\newcommand{\scrL}{\mathcal{L}}
\newcommand{\scrF}{\mathcal{F}}
\newcommand{\N}{\mathbb{N}}
\newcommand{\R}{\mathbb{R}}
\newcommand{\C}{\mathbb{C}}
\newcommand{\HH}{\mathbb{H}}
\renewcommand{\arg}{\textrm{Arg}}
\DeclareMathOperator{\SL}{SL}
\newcommand{\re}{{\text{Re}}}
\newcommand{\im}{{\text{Im}}}
\newcommand{\sm}{\left(\begin{smallmatrix}}
\newcommand{\esm}{\end{smallmatrix}\right)}
\newcommand{\bpm}{\begin{pmatrix}}
\newcommand{\ebpm}{\end{pmatrix}}
\newtheorem{theorem}{Theorem}
\newtheorem{lemma}[theorem]{Lemma}
\newtheorem{proposition}[theorem]{Proposition}
\newtheorem{corollary}[theorem]{Corollary}
\newtheorem{definition}[theorem]{Definition}
\theoremstyle{remark}
\newtheorem{remark}[theorem]{Remark}
\newtheorem*{example}{Example}
\numberwithin{theorem}{section}
\numberwithin{equation}{section}
\title{Derivatives of $L$-series of weakly holomorphic cusp forms}
\author{Nikolaos Diamantis} 
\address{University of Nottingham}
\email{nikolaos.diamantis@nottingham.ac.uk}
\author{Fredrik Str\"{o}mberg}
\address{University of Nottingham}
\email{fredrik.stromberg@nottingham.ac.uk}
\begin{document}

\maketitle

\begin{abstract}
Based on the theory of $L$-series associated with weakly holomorphic
modular forms in \cite{DLRR}, we derive explicit formulas for central
values of derivatives of $L$-series as integrals with limits inside
the upper half-plane. This has computational advantages, already in
the case of classical holomorphic cusp forms and, in the last section,
we discuss computational aspects and explicit examples.
\end{abstract}

\section{Introduction}
As evidenced by the prominence of conjectures such as those of Birch-Swinnerton-Dyer, Beilinson etc., central values of derivatives of $L$-series are key invariants of modular forms. Explicit forms of their values are therefore desirable, since they can lead to either theoretical or numerical insight about their nature.

On the other hand, an extension of classical modular forms that allowed for poles at the cusps, the \emph{weakly holomorphic modular forms}, has, more recently, been the focus of intense research, with Borcherd's work \cite{Borcherds} representing an important highlight followed by further applications to arithmetic, combinatorial and other aspects, e.g. in \cite{BruinierOno, Zw, BO, DIT} etc.

Up until relatively recently, $L$-series of weakly holomorphic modular forms had not been studied systematically. In fact, to our knowledge, a first definition was given in \cite{BFK} in 2014. In work by the first author and his collaborators \cite{DLRR}, a systematic approach for all harmonic Maass forms was proposed which led to functional equations, converse theorems etc.

A first application to special values of the $L$-series defined in \cite{DLRR} was given in \cite{DR}, where results of \cite{BFI} on cycle integrals were streamlined and generalised. Part of the work in \cite{BFI} was based on an explicit formula of what could be thought of as the (at
the time of writing of \cite{BFI}, not yet defined) central L-value of a weight $0$ weakly holomorphic
form. That formula had been suggested, in the case of the Hauptmodul, by Zagier. In \cite{DR} we interpreted those cycle integrals as values of the $L$-series defined in \cite{DLRR} and this allowed us to generalise the formulas of \cite{BFI}.

Here, we extend that study to values of \emph{derivatives} of  $L$-series of weakly holomorphic forms. To state the main theorem, we will briefly introduce the terms involved, but we will discuss them in more detail in the next section. 

Let $k \in 2\N$. We consider the action $|_k$ of $\SL_2(\R)$ on smooth functions $f\colon \HH \to \C$ on the complex upper half-plane $\HH$, given by 
\begin{equation*} 
(f|_k\gamma)(z):= j(\gamma, z)^{-k} f(\gamma z), \qquad \text{for $\gamma=\bpm a & b \\  c & d \ebpm \in$ SL$_2(\R)$}.
\end{equation*} 
where $j(\gamma, z):=cz+d$
. We further recall the defining formula for the Laplace transform $\scrL$
of a piecewise smooth complex-valued function $\varphi$ on $\R$. It is given by
\begin{equation}\label{e:Laplace_trans}
(\scrL \varphi)(s):=\int_0^{\infty} e^{-s t} \varphi(t)dt    
\end{equation}
for each $s \in \C$ for which the integral converges absolutely. We use the same notation $\scrL \varphi$ for its analytic continuation to a larger domain, if such a continuation exists. Finally, if $N \in \N$, 
$$W_N:=\sm 0 & -1/\sqrt{N} \\\sqrt{N} & 0 \esm. $$

Let now $f$ be a weakly holomorphic cusp form of weight $k$ for $\Gamma_0(N)$ with Fourier expansion 
 \begin{equation}\label{FourExInt}
f(z) = \sum_{\substack{n \ge -n_0}} a_f(n) e^{2\pi inz}.
\end{equation} 
Then the $L$-series of $f$ is defined in \cite{DLRR} as the map
$\Lambda_f$ given by
\begin{equation}\label{LseriesInt}
\Lambda_f(\ph)=\sum_{n \ge -n_0} a_f(n) (\scrL \ph)(2 \pi n) 
\end{equation}
for each $\varphi$ in a certain family of functions on $\R$  which will be defined in the next section.

The main object of concern in this note will be the specialisation of this $L$-series to a specific family of test functions: For $(s, w) \in \C \times \HH$  we denote
\begin{equation}\label{varphInt}\varphi_s^w(t):=\mathbf 1_{[1/\sqrt{N}, \infty)}(t) N^{s/2}e^{-wt}t^{s-1}, \qquad \text{for $t>0$}.
\end{equation}
where $\mathbf 1_X$ denotes the characteristic function of $X \subset \mathbb R.$ We then set
\begin{equation}\label{L(fInt)}
\Lambda(f, s):=\Lambda_f(\varphi_s^0)
\end{equation}
With this notation, we have
\begin{theorem}\label{mainwf} Let $k \in 2\N$ and $m \in \N$. For each weakly holomorphic cusp form of weight $k$ for $\Gamma_0(N)$ such that $f|_kW_N=f$ 
we have
\begin{equation*}
\Lambda^{(m)}(f,k/2)=i^{2m-\frac{k}{2}}N^{\frac{k}{4}}\sum_{j=0}^m  \binom{m}{j}\log^j \left (\frac{i}{\sqrt{N}}\right ) 
\int_{\frac{i}{\sqrt{N}}}^{\frac{i}{\sqrt{N}}+1}f(z) \zeta^{(m-j)} \left (1-\frac{k}{2}, z \right ) dz
\end{equation*}
where $\zeta(s, z)$ stands for the classical Hurwitz zeta function and $\zeta^{(r)}(s,z)=\frac{\partial^{r}}{\partial s^{r}} \zeta(s,z)$.
\end{theorem}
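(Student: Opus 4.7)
The strategy is to compute $(\scrL \varphi_s^0)^{(m)}(2\pi n)$ by differentiating under the integral in $s$, assemble $\Lambda^{(m)}(f,k/2) = \sum_n a_f(n)(\scrL \varphi_{k/2}^0)^{(m)}(2\pi n)$, and then identify the resulting vertical integral with the Hurwitz-zeta integral on the right-hand side using the invariance $f|_kW_N = f$. Iterating $\frac{d}{ds}[N^{s/2}t^{s-1}] = N^{s/2}t^{s-1}\log(t\sqrt N)$ yields
$$(\scrL \varphi_s^0)^{(m)}(\lambda) = N^{s/2}\int_{1/\sqrt N}^{\infty} e^{-\lambda t} t^{s-1}(\log(t\sqrt N))^m\,dt.$$
For $\lambda = 2\pi n > 0$ and $s=k/2$, the substitution $t = -iz$ rotates the contour to $[i/\sqrt N, i\infty)$, produces the factor $N^{k/4}(-i)^{k/2} = \alpha^{k/2}$ with $\alpha := -i\sqrt N$, and (in the principal branch) turns $\log(t\sqrt N)$ into $\log\alpha + \log z$. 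The binomial expansion of $(\log\alpha+\log z)^m$ then yields
$$(\scrL \varphi_{k/2}^0)^{(m)}(2\pi n) = \alpha^{k/2}\sum_{j=0}^m \binom{m}{j}(\log\alpha)^j \int_{i/\sqrt N}^{i\infty} e^{2\pi i n z} z^{k/2-1}(\log z)^{m-j}\,dz,$$
with the analogous identity for $n<0$ following by analytic continuation (interpreting both sides via the incomplete gamma function). Summing over $n$ against $a_f(n)$ and interchanging sum with integral produces $\Lambda^{(m)}(f,k/2) = \alpha^{k/2}\sum_j \binom{m}{j}(\log\alpha)^j I_{m-j}$, where $I_r := \int_{i/\sqrt N}^{i\infty} f(z) z^{k/2-1}(\log z)^r\,dz$ requires reinterpretation via $f|_kW_N = f$, since the principal part of $f$ makes it genuinely divergent.

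The identification of $I_r$ with the Hurwitz-zeta integral proceeds via an auxiliary complex parameter. For $\re s < 0$ the Dirichlet series $\zeta(1-s,z) = \sum_{n\ge 0}(n+z)^{s-1}$ converges absolutely; differentiating $r$ times in $s$, exchanging sum and integral, and telescoping using the periodicity $f(z+n)=f(z)$ gives
$$\int_{i/\sqrt N}^{i/\sqrt N + 1} f(z)\,\zeta^{(r)}(1-s, z)\,dz = (-1)^r\int_{i/\sqrt N}^{\infty+i/\sqrt N} f(z) z^{s-1}(\log z)^r\,dz.$$
The horizontal integral on the right should match the vertical integral $I_r$ (at $s=k/2$) by deforming the contour through a large quarter-circle in the first quadrant: the arc contribution vanishes for the cuspidal part of $f$ by exponential decay, and for the principal part the hypothesis $f|_kW_N = f$ lets the divergent near-$i\infty$ tail be rewritten as a convergent integral near $0$. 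Both sides are meromorphic in $s$, so the identity extends by analytic continuation to $s=k/2$, giving $I_r = (-1)^r\int_{i/\sqrt N}^{i/\sqrt N+1} f(z)\,\zeta^{(r)}(1-k/2,z)\,dz$.

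Substituting back and using the principal-branch identities $\alpha^{k/2} = i^{-k/2}N^{k/4}$ and $\log\alpha = -\log(i/\sqrt N)$, so that $(\log\alpha)^j = (-1)^j\log^j(i/\sqrt N)$, the signs combine as $(-1)^j\cdot(-1)^{m-j} = (-1)^m = i^{2m}$, yielding exactly the prefactor $i^{2m-k/2}N^{k/4}$ of the theorem. The principal obstacle is the identification step above: interpreting the divergent $I_r$ and equating it with the Hurwitz-zeta integral in the weakly holomorphic setting. Without $f|_kW_N = f$ the near-$i\infty$ tail has no convergent meaning, and the functional equation is essential for this reinterpretation. A subsidiary technical point is the analytic continuation in $s$ from $\re s < 0$ to $s = k/2$, which requires verifying holomorphy of both sides in a neighbourhood of $s = k/2$.
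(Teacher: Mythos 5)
Your overall architecture (Laplace transform of $\varphi_s^0$, contour rotation, binomial expansion of $(\log\alpha+\log z)^m$, telescoping the horizontal integral against the Hurwitz zeta series) is a recognizable alternative to the paper's route, and your bookkeeping of constants is correct: $\alpha^{k/2}=i^{-k/2}N^{k/4}$ and $(\log\alpha)^j(-1)^{m-j}=i^{2m}\log^j(i/\sqrt N)$ do reproduce the prefactor $i^{2m-k/2}N^{k/4}$. However, the two steps you yourself flag as "the principal obstacle" are genuine gaps, not technicalities. First, for $n<0$ the vertical integral $\int_{i/\sqrt N}^{i\infty}e^{2\pi inz}z^{k/2-1}(\log z)^{m-j}\,dz$ diverges, so there is no identity on which to perform "analytic continuation via the incomplete gamma function"; the left-hand side $(\scrL\varphi^0_{k/2})^{(m)}(2\pi n)$ is defined by continuing $E_{1-s}$ to negative real argument, but the right-hand side has no meaning, and consequently your $I_r$ is never given a regularized definition. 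Second, the vertical-to-horizontal deformation through a quarter circle fails even for the holomorphic part: along the portion of the arc near the line $\im z=1/\sqrt N$ the integrand is only $O(R^{k/2-1}\log^rR)$ (the cuspidal decay of $f$ is in $\im z$, not along the arc), so the arc contribution does not vanish; the horizontal integral at $s=k/2$ converges only conditionally through oscillation, which a crude arc estimate cannot capture. The appeal to $f|_kW_N=f$ to "rewrite the divergent tail" is not substantiated — and it is telling that the paper's own proof of this theorem never uses Fricke invariance.

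The missing device is a regularization that makes every step absolutely convergent before any limit is taken. The paper inserts the factor $e^{-xt}$ (i.e.\ works with $\varphi_s^{ix}$, $x>0$): then each term becomes $E_{1-s}((2\pi n+ix)/\sqrt N)$ with argument in the upper half-plane, Lemma \ref{bend} legitimately rotates the contour to the \emph{horizontal} ray (the arc vanishes because $|e^{iwz}|$ genuinely decays as $\re z\to\infty$ when $\im w=x>0$), the periodicity of $f$ produces the \emph{Lerch} zeta function $\zeta(1-s,\tfrac{ix}{2\pi},z)$ whose series converges absolutely thanks to the factors $e^{-mx}$, and only at the very end is $x\to0^+$ taken, using the expansion \eqref{Erd} in which the divergent term $\Gamma(1-s)x^{s-1}$ is annihilated by $\int_{i/\sqrt N}^{i/\sqrt N+1}f(z)\,dz=0$. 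Your proof would need either to import this regularization or to supply a rigorous substitute for the two steps above; as written, the identification of $I_r$ with $(-1)^r\int f\,\zeta^{(r)}(1-k/2,z)\,dz$ is asserted rather than proved.
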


Our approach yields new expressions for derivatives of $L$-series of classical cusp forms too. 
Specifically, classical $L$-series can be  expressed in terms of the $L$-series associated with weakly holomorphic forms in \cite{DLRR} in the following way: For a classical cusp form $f$ of weight $k$ and level $N$ with $L$-series $L_f(s)$, we consider its completed $L$-function
$$L^*_f(s):=\left (\frac{\sqrt{N}}{2 \pi}\right )^s \Gamma(s) L_f(s).$$ 
Then, as verified in Sect. \ref{class}, we have
$$L^*_f(s)=\lim_{x \to 0^+}L_f(\ph_s^{ix}-\ph_{k-s}^{ix})$$
for $\ph_s^{ix}$ as in \eqref{varphInt}. Because of this, we can apply the method that led to Th. \ref{mainwf}, to deduce Th. \ref{main}, a special case of which is the following:
\begin{theorem}\label{cormain} For each weight $2$ cusp form $f$ of level $N$, such that $f|_2W_N=f$
we have
\begin{equation*}
(L_f^*)'(1)=2\sqrt{N} i\int_{\frac{i}{\sqrt{N}}}^{\frac{i}{\sqrt{N}}+1} f(z)\left ( \log(\Gamma(z))+(\log (\sqrt{N})-\pi i/2)z \right ) dz.
\end{equation*}
\end{theorem}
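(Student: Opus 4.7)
The plan is to obtain Theorem \ref{cormain} by combining Theorem \ref{mainwf}, applied to the classical cusp form $f$ (which is trivially weakly holomorphic with $n_0=0$), with the identity $L^*_f(s) = \Lambda(f,s) - \Lambda(f,k-s)$ that follows from the limit relation $L^*_f(s) = \lim_{x\to 0^+} L_f(\varphi_s^{ix} - \varphi_{k-s}^{ix})$ recorded in the excerpt.

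First I would note that for a classical cusp form the limit $x\to 0^+$ may be taken inside the definition of $L_f$, so that $\lim_{x\to 0^+} L_f(\varphi_s^{ix}) = \Lambda_f(\varphi_s^0) = \Lambda(f,s)$. Hence, with $k=2$, $L^*_f(s) = \Lambda(f,s) - \Lambda(f,2-s)$. Differentiating once and setting $s=1$, the chain rule yields $(L^*_f)'(1) = 2\Lambda'(f,1)$, since the derivative of $\Lambda(f,2-s)$ with respect to $s$ equals $-\Lambda'(f,2-s)$ and evaluating at $s=1$ gives two equal contributions.

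Next I would invoke Theorem \ref{mainwf} with $k=2$ and $m=1$, which after expanding the binomial sum gives
\[
\Lambda'(f,1) = i\sqrt{N}\left[\int_{i/\sqrt{N}}^{i/\sqrt{N}+1}\!\!\!f(z)\,\zeta'(0,z)\,dz \;+\; \log\!\left(\frac{i}{\sqrt{N}}\right)\!\int_{i/\sqrt{N}}^{i/\sqrt{N}+1}\!\!\!f(z)\,\zeta(0,z)\,dz\right].
\]
I would then substitute the explicit evaluations $\zeta(0,z)=\tfrac{1}{2}-z$ and, via Lerch's formula, $\zeta'(0,z)=\log\Gamma(z)-\tfrac{1}{2}\log(2\pi)$, together with $\log(i/\sqrt{N}) = i\pi/2-\log\sqrt{N}$. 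Collecting terms, the purely constant contributions to the integrand (those independent of $z$) vanish upon integration against $f$, because $\int_{i/\sqrt{N}}^{i/\sqrt{N}+1}f(z)\,dz = a_f(0)=0$ for a cusp form. The surviving part of the integrand reduces to $\log\Gamma(z)+(\log\sqrt{N}-i\pi/2)z$, and multiplying through by the overall factor $2i\sqrt{N}$ produces the claimed identity.

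The most delicate points are verifying the limit identity underlying $L^*_f(s)=\Lambda(f,s)-\Lambda(f,2-s)$ (which is handled in Section \ref{class} of the paper) and keeping careful track of which pieces of the integrand are constant in $z$ so that the $a_f(0)=0$ cancellation applies cleanly; both are computational rather than conceptual steps, with the genuine work being carried out by Theorem \ref{mainwf}.
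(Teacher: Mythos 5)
Your proof is correct, but it takes a genuinely different (and shorter) route than the paper's. The paper does not derive Theorem \ref{cormain} from Theorem \ref{mainwf}: instead it reruns the whole argument of Section \ref{sec:derivatives} in a ``symmetrised'' setting in Section \ref{class}, working with the test function $\varphi_s^{w}+i^{k}\varphi_{k-s}^{w}$, proving the analogue of Proposition \ref{completedfinwh} (Proposition \ref{completedfin}), and arriving at a formula for $(L_f^*)^{(m)}(k/2)$ for all even $k$ and all $m$ (the display preceding Theorem \ref{main}), of which Theorem \ref{cormain} is the case $k=2$, $m=1$. You instead use linearity of $\varphi\mapsto L_f(\varphi)$ to get $L_f^*(s)=\Lambda(f,s)-\Lambda(f,2-s)$ for $k=2$ --- which for a holomorphic cusp form is immediate by comparing \eqref{completed0} with \eqref{L(f}, and indeed no limit in $x$ is really needed since all arguments $2\pi n/\sqrt N$ are positive --- whence $(L_f^*)'(1)=2\Lambda'(f,1)$, and then you quote Theorem \ref{mainwf} (equivalently Corollary \ref{cormainwf}); the final bookkeeping with $\zeta(0,z)=\tfrac12-z$, $\zeta'(0,z)=\log\Gamma(z)-\tfrac12\log(2\pi)$ and \eqref{transl} is exactly as in the paper's proof of Corollary \ref{cormainwf}. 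Your argument makes rigorous what the authors only state as a remark at the end of Section \ref{sec:derivatives} (the ``factor of $2$'' explanation), and it is complete for the weight-$2$, first-derivative statement at hand. What the paper's longer route buys is the general Theorem \ref{main} for arbitrary even weight and order of derivative, where the relation between $(L_f^*)^{(m)}$ and $\Lambda^{(m)}(f,\cdot)$ carries the factor $i^{k}+i^{2m}$ (which can vanish) rather than a simple doubling.
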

In particular, this formula interprets the central value of the first derivative as an integral with limits inside the upper half-plane. 
After providing the theoretical background in Section \ref{Prelim} and provide proofs of Theorems \ref{mainwf} and \ref{cormain} in Sections 
 \ref{sec:derivatives} and \ref{class} we will present some remarks regarding computational aspects, potential applications and numerical examples 
 of Theorems \ref{mainwf} and \ref{cormain} in the final section.

\section*{Acknowledgements}
We thank Dorian Goldfeld for helpful and encouraging comments on the manuscript. 
Part of the work was done while the first author was visiting Max Planck Institute for Mathematics in Bonn, whose hospitality he acknowledges. Research on this work is partially supported by the authors' EPSRC grants (ND: EP/S032460/1 FS: EP/V026321/1).  

\section{$L$-series evaluated at test functions}\label{Prelim}
In \cite{DLRR}, a new type of $L$-series was associated with general harmonic Maass forms and some basic theorems about it were proved. In this section we will provide relevant results in the special case which we need here, namely weight $k$ weakly holomorphic cusp forms for $\Gamma_0(N)$.  We require some additional definitions to describe the set-up.

Let $C(\R, \C)$ be the space of piecewise smooth complex-valued functions on $\R$.
For each function $f$ given by an absolutely convergent series of the form
 \begin{equation}\label{FourEx}
f(z) = \sum_{\substack{n \ge -n_0 \\ n \neq 0}} a_f(n) e^{2\pi inz},
\end{equation} 
we let $\scrG_f$ be the space of functions $\varphi\in C(\R, \C)$ such that  
\begin{enumerate}
\item[i)] the integral defining $\scrL\varphi$ converges absolutely if $\Re(s) \ge 2 \pi N$ for some $N \in \N$,  
\item[ii)]  the function $\scrL\varphi$ has an analytic continuation to $\{s \in \HH, \Re(s) >-2 \pi n_0-\epsilon\}$ and can be continuously extended to $\{s \ne 0; s\ge -2 \pi n_0 \}$
\item[iii)] the following series converges:  
\begin{equation}\label{Ff}
\sum_{\substack{n \ge N \\ n \neq 0}} |a(n)| (\scrL|\varphi|)\left (2 \pi n\right ).
\end{equation}
\end{enumerate}
We are now able to define the  $L$-series and recall some results from \cite{DLRR}.
\begin{definition}\label{def:Lf}
Let $f$ be a function on $\HH$ given by 
the Fourier expansion \eqref{FourEx}. 
The $L$-series of $f$ is defined to be the map
$\Lambda_f\colon \mathcal G_f \to \C$ such that, for $\varphi\in \scrG_{f}$, 
\begin{equation}\label{Lseries}
\Lambda_f(\ph)=\sum_{\substack{n \ge N \\ n \neq 0}} a_f(n) (\scrL \ph)(2 \pi n). 
\end{equation}
\end{definition}
Furthermore, for $\re(z)>0$, we recall the generalised exponential integral by
\begin{equation}\label{E-G}
E_p(z):=z^{p-1}\Gamma(1-p,z)=\int_{1}^{\infty}\frac{e^{-zt}}{t^{p}}\mathrm{d}t
\end{equation} The function $E_p(z)$ has an analytic continuation to $\mathbb C \setminus (-\infty, 0]$ as a function of $z$ to give the \emph{principal branch} of $E_p(z)$. 
Specifically, from now on we will always consider the principal branch of the logarithm, so that $-\pi< \arg(z) \le \pi$. Then, we define the analytic continuation of $E_p(z)$  as in (8.19.8) and (8.19.10) of \cite{NIST} to be: 
\begin{equation}\label{EpAnCont}
E_p(z)= \begin{cases} z^{p-1}\Gamma(1-p)-\sum\limits_{0\leq k}\frac{(-z)^k}{k!(1-p+k)} \qquad &\text{for $p \in \mathbb C -\mathbb N$},\\ 
\frac{(-z)^{p-1}}{(p-1)!}(\psi(p)-\log(z))-\sum\limits_{0\leq k \neq p-1}\frac{(-z)^k}{k!(1-p+k)} \qquad &\text{for $p \in \mathbb N$}.
\end{cases}
\end{equation} 
 Since the two series on the right hand side of \eqref{EpAnCont} give entire functions, we can continuously extend $E_p(z)$ to $\mathbb R_{<0}$.  
By (8.11.2) of \cite{NIST}, we also have the bound \begin{equation}\label{boundE}E_p(z)=O(e^{-z}), \qquad \text{
as $z \to \infty$  in the wedge $\arg(z)<3 \pi/2.$}
\end{equation}
A lemma that will be crucial is the sequel is:
\begin{lemma}[\cite{DLRR}]\label{bend}
 If $\im(w)>0$, then we have
\begin{equation}\label{bendeq}
i^{a} E_{1-a}(w)=
\int_i^{i+\infty} e^{iwz} z^{a-1} dz.
\end{equation}
for all $a \in\mathbb R.$  If $\im(w)=0$ and $\re(w)>0$, then \eqref{bendeq} holds for all $a<0.$ 
\end{lemma}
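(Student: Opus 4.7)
The plan is to prove the identity in three stages: first on the open first quadrant $\{w : \re(w) > 0,\, \im(w) > 0\}$ via Cauchy's theorem, then on the rest of the upper half-plane by analytic continuation in $w$, and finally on the positive real axis (for $a < 0$) by continuity.

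\emph{Stage 1.} Parameterizing $z = iy$ on the vertical ray from $i$ to $i\infty$ gives, whenever $\re(w) > 0$,
\[
\int_i^{i\infty} e^{iwz} z^{a-1} \, dz = i^a \int_1^\infty e^{-wy} y^{a-1} \, dy = i^a E_{1-a}(w),
\]
so it suffices to show the horizontal and vertical rays from $i$ yield the same integral. Close them with the quarter-arc $z = i + R e^{i\theta}$, $\theta \in [0, \pi/2]$, joining $i + R$ to $(R+1)i$; the resulting closed contour bounds a region contained in $\{\re(z) \geq 0,\, \im(z) \geq 1\}$, where $e^{iwz} z^{a-1}$ is holomorphic (via the principal branch). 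Cauchy's theorem thus relates the three integrals. On the arc, writing $w = u + iv$, one computes
\[
|e^{iwz}| = e^{-u}\, e^{-R(u\sin\theta + v\cos\theta)} \leq e^{-u}\, e^{-R\min(u,v)},
\]
using the elementary inequality $u\sin\theta + v\cos\theta \geq \min(u, v)$ for $\theta \in [0, \pi/2]$ and $u, v > 0$. Since $|z^{a-1}|$ grows at most polynomially in $R$, the arc integral tends to $0$ as $R \to \infty$, yielding the identity for $w$ in the open first quadrant.

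\emph{Stage 2.} Both sides of \eqref{bendeq} extend to holomorphic functions of $w$ on the upper half-plane: the LHS via \eqref{EpAnCont}, and the RHS by differentiating under the integral in $\int_0^\infty e^{iw(i+t)}(i+t)^{a-1} \, dt$, justified by the locally uniform bound $|e^{iw(i+t)}| \leq e^{-\re(w)} e^{-\im(w) t}$. Since they agree on $\{\re(w) > 0,\, \im(w) > 0\}$, the identity theorem forces agreement on the whole upper half-plane.

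\emph{Stage 3.} For $\im(w) = 0$, $\re(w) > 0$, and $a < 0$, the bound $|(i+t)^{a-1}| \leq (1+t^2)^{(a-1)/2}$ is integrable on $[0, \infty)$, so the RHS converges absolutely and is continuous in $w$ as $\im(w) \to 0^+$ by dominated convergence. Continuity of the LHS on $\C \setminus (-\infty, 0]$ then yields the identity on the positive real axis. The main obstacle is the arc estimate in Stage 1; once the inequality $u\sin\theta + v\cos\theta \geq \min(u,v)$ on $[0,\pi/2]$ is noted, the contour deformation goes through cleanly, and the subsequent extensions by analytic continuation and continuity are routine.
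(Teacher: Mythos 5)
Your argument is correct: the parameterization of the vertical ray, the quarter-circle contour centered at $i$ with the arc estimate $u\sin\theta+v\cos\theta\ge\min(u,v)$, and the subsequent extensions by the identity theorem in $w$ and by dominated convergence (where $a<0$ is exactly what makes $(1+t^2)^{(a-1)/2}$ integrable) all check out. The paper itself quotes this lemma from \cite{DLRR} without proof, and your contour-rotation argument is the standard route to it, so there is nothing to reconcile.
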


Let $S_k^!(N)$ denote the space of weakly holomorphic cusp forms of weight $k$ for $\Gamma_0(N)$. 
Suppose that $f\in S_k^!(N)$ has Fourier expansion \eqref{FourEx} with respect to the cusp at $\infty$.
By \cite[Lemma~3.4]{BF}, there exists a constant $C_f>0$ such that
\begin{equation} \label{coeffbound}
a_f(n)=O \left (e^{C_f \sqrt{n}}\right ),
\qquad \text{as $n \to \infty$}.
\end{equation}
The $L$-series of $f$ is then defined to be the map
$\Lambda_f\colon \mathcal G_f  \to \C$ given in Definition \ref{def:Lf}.

To describe the $L$-values and derivatives which we are interested in we consider the 
family of test functions given by \eqref{varphInt} and then set
\begin{equation}\label{L(f}
\Lambda(f, s):=\Lambda_f(\varphi_s^0)= \sum_{\substack{n=-n_0 \\  n \ne 0}}^\infty a_f(n) E_{1-s}\left (\frac{2 \pi n}{\sqrt N} \right ).
\end{equation}

\begin{remark}\label{justifi} 
Though more similar in appearance to the usual $L$-series than \eqref{Lseries}, we do not consider $\Lambda(f, s)$ as the ``canonical'' $L$-series of $f$, because, in contrast to $\Lambda_f(\varphi)$ (see Th. 3.5 of \cite{DR}),  it does not satisfy a functional equation with respect to $s$. We formulate our results in terms of $\Lambda(f, s)$ to incorporate it into the setting of \cite{BFI}
and Zagier's formula mentioned in the introduction. The choice of $\Lambda$, rather than $L$ in the notation hints at the analogy with the ``completed" version of the classical $L$-series, rather than with the $L$-series itself. 
\end{remark}

By the proof of Lemma 4.1 of \cite{DR}, or directly, we see that, for $\re(w)>-\epsilon$, $\varphi_s^w \in \mathcal G_f$ and 
\begin{equation}\label{completedwh}
\Lambda_f(\ph_s^{w})=
N^{\frac{s}{2}}\sum_{\substack{n\ge -n_0 \\ n \not =0}}a_f(n)\int_{\frac{1}{\sqrt{N}}}^{\infty}e^{-2 \pi n t-wt }t^{s-1}dt
=\sum_{\substack{n \ge -n_0 \\ n \not =0}} a_f(n)E_{1-s}\left ( \frac{2 \pi n +w}{\sqrt{N}} \right ).
\end{equation}
Because of \eqref{boundE} and the trivial bound for $a_f(n)$, the series $\sum_{\substack{n>0}} a_f(n)E_{1-s}( (2 \pi n +w)/\sqrt{N})$ converges absolutely and uniformly in compact subsets of $\{w \in \HH; \re(w) >-\epsilon\}$, for each fixed $s \in \C$. Since, in addition, $E_{1-s}(z)$ is  continuous from above at each $z \in \mathbb R_{<0}$, we deduce, by comparing with \eqref{L(f}, that
$$\lim_{x \to 0^+}\Lambda_f(\ph_s^{ix})=\Lambda(f, s).$$ 
Let now $s \in \R$ and $x>0$. By Lemma \ref{bend}, followed by a change of variables and \eqref{FourEx}, the sum \eqref{completedwh} becomes
\begin{equation}\label{preform} i^{-s}\sum_{\substack{n \ge -n_0 \\ n \not =0}}a_f(n)\int_{i}^{i+\infty}e^{\frac{(2 \pi n +ix)iz}{\sqrt{N}} }z^{s-1}dz\\
=
i^{-s}N^{s/2}\int_{\frac{i}{\sqrt{N}}}^{\frac{i}{\sqrt{N}}+\infty}e^{-xz}f(z)z^{s-1}dz.
\end{equation}
With the periodicity of $f$, we see that the last integral equals
$$
\sum_{n=0}^{\infty}\int_{\frac{i}{\sqrt{N}}+n}^{\frac{i}{\sqrt{N}}+n+1}e^{-xz}f(z)z^{s-1}dz=
\int_{\frac{i}{\sqrt{N}}}^{\frac{i}{\sqrt{N}}+1}e^{-xz}f(z)\zeta \left (1-s, \frac{ix}{2\pi}, z \right )dz,$$
where
$$\zeta(s, a, z):=\sum_{m=0}^{\infty} e^{2 \pi i m a}(z+m)^{-s}$$
is the Lerch zeta function, which is well-defined since $x>0$. Therefore, we have the following:
\begin{proposition}\label{completedfinwh} For each $f \in S^!_k(N)$ 
and for each $x>0$ and $s \in \mathbb R$, we have
$$\Lambda_f(\ph_s^{ix})= i^{-s}N^{\frac{s}{2}} \int_{\frac{i}{\sqrt{N}}}^{\frac{i}{\sqrt{N}}+1}e^{-xz}f(z) \zeta \left (1-s, \frac{ix}{2\pi}, z \right ) dz.$$
\end{proposition}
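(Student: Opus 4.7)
The proof is essentially laid out in the calculation immediately preceding the statement, so my proposal is to gather and justify those steps carefully. The starting point is the identity
$$\Lambda_f(\ph_s^{ix})=\sum_{\substack{n\ge -n_0\\ n\neq 0}} a_f(n)\, E_{1-s}\!\left(\tfrac{2\pi n+ix}{\sqrt N}\right)$$
from \eqref{completedwh}. For each $n$ in the sum we have $\im\bigl((2\pi n+ix)/\sqrt N\bigr)=x/\sqrt N>0$, so Lemma~\ref{bend} applies (with $a=s$) and rewrites each term as a contour integral
$$E_{1-s}\!\left(\tfrac{2\pi n+ix}{\sqrt N}\right)=i^{-s}\int_{i}^{i+\infty}\exp\!\Bigl(i\tfrac{(2\pi n+ix)}{\sqrt N}\,z\Bigr)z^{s-1}\,dz.$$

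Next I would swap the (finite plus convergent) sum with the integral to obtain
$$\Lambda_f(\ph_s^{ix})=i^{-s}\int_{i}^{i+\infty}\Bigl(\sum_{\substack{n\ge -n_0\\ n\neq 0}}a_f(n)\,e^{2\pi in z/\sqrt N}\Bigr)e^{-xz/\sqrt N}\,z^{s-1}\,dz.$$
Absolute convergence on the contour (needed for Fubini) is guaranteed by the exponential decay \eqref{boundE} of $E_{1-s}$ together with the Bruinier--Funke bound \eqref{coeffbound}; the finitely many negative-index terms are harmless. After the change of variable $z\mapsto \sqrt N\,z$ the bracket becomes $f(z)$ via \eqref{FourEx} (up to the harmless missing $n=0$ term, which is zero since $f$ is a cusp form), yielding
$$\Lambda_f(\ph_s^{ix})=i^{-s}N^{s/2}\int_{i/\sqrt N}^{i/\sqrt N+\infty}e^{-xz}f(z)\,z^{s-1}\,dz.$$

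Finally I would exploit the $1$-periodicity of $f$ by splitting the contour into unit-length pieces $[i/\sqrt N+n,\,i/\sqrt N+n+1]$ and translating each back to the fundamental piece:
$$\int_{i/\sqrt N}^{i/\sqrt N+\infty}e^{-xz}f(z)z^{s-1}\,dz=\int_{i/\sqrt N}^{i/\sqrt N+1}e^{-xz}f(z)\sum_{n=0}^{\infty}e^{-xn}(z+n)^{s-1}\,dz.$$
Interchanging sum and integral is legal because $x>0$ forces the inner sum to converge absolutely and uniformly on the compact contour. Recognising
$$\sum_{n=0}^{\infty}e^{-xn}(z+n)^{-(1-s)}=\sum_{n=0}^{\infty}e^{2\pi in(ix/(2\pi))}(z+n)^{-(1-s)}=\zeta\!\left(1-s,\tfrac{ix}{2\pi},z\right)$$
yields the claimed formula.

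The bookkeeping is routine; the only genuine technical point is justifying the two interchanges of sum and integral, but both follow directly from the decay estimate \eqref{boundE}, the coefficient bound \eqref{coeffbound}, and the hypothesis $x>0$. I expect no substantive obstacle beyond these standard dominated-convergence arguments.
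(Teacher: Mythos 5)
Your proposal is correct and follows essentially the same route as the paper: apply Lemma \ref{bend} termwise (valid since $\im((2\pi n+ix)/\sqrt N)=x/\sqrt N>0$), interchange sum and integral to reassemble $f$, rescale by $\sqrt N$, and then unfold the contour by periodicity to produce the Lerch zeta function. The extra care you take with the two Fubini interchanges is exactly the right point to flag, and your justification is sound.
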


\section{Derivatives of $\Lambda(f, s)$\label{sec:derivatives}}
Let $m$ be a positive integer. By $\Lambda^{(m)}_f(\ph_s^{w})$ we denote the $m$th derivative with respect to $s$.  Equation \eqref{completedwh} implies that 
\begin{equation}\label{derL}
\Lambda^{(m)}_f(\ph_s^{w})|_{s=\frac{k}{2}}
=\sum_{\substack{n \ge -n_0 \\ n \not =0}}a_f(n)\frac{d^m}{ds^m}\left (E_{1-s}\left ( \frac{2 \pi n +w}{\sqrt{N}} \right )\right)\Big |_{s=\frac{k}{2}}.
\end{equation}
By the absolute and uniform, in $w$ with $\re(w)>-\epsilon$, convergence of the piece of this series with $n>0$, we deduce that the limit as $w \to 0$ (from above) exists and, with \eqref{L(f}, we have
\begin{equation}\label{derivwh} \lim_{x \to 0^+} \left ( \Lambda^{(m)}_f(\ph_s^{ix})|_{s=\frac{k}{2}}\right )=
\sum_{\substack{n \ge -n_0 \\ n \not =0}}a_f(n)\frac{d^m}{ds^m}\left (E_{1-s}\left ( \frac{2 \pi n}{\sqrt{N}} \right )\right)\Big |_{s=\frac{k}{2}}=
\Lambda^{(m)}(f, k/2).
\end{equation}
On the other hand, we have
\begin{multline*}\frac{d^m}{ds^m}\left ( (i/\sqrt{N})^{-s}\zeta \left (1-s, \frac{ix}{2\pi}, z \right ) \right ) \Big |_{s=\frac{k}{2}}\\=(-1)^m\left (\frac{\sqrt{N}}{i} \right)^{\frac{k}{2}}\sum_{j=0}^m (-1)^j \binom{m}{j}\log \left (\frac{\sqrt{N}}{i} \right )^j \zeta^{(m-j)}\left (1-\frac{k}{2}, \frac{ix}{2 \pi}, z \right ).
\end{multline*}
Using \eqref{derivwh} and Prop. \ref{completedfinwh}, we deduce that
\begin{multline}\label{derivfinwf} 
\Lambda^{(m)}(f, k/2)=(-1)^{m} \left (\frac{\sqrt{N}}{i}\right )^{\frac{k}{2}}\sum_{j=0}^m \binom{m}{j}\log \left (\frac{i}{\sqrt{N}}\right )^j \times \\
\lim_{x \to 0^+}
\int_{\frac{i}{\sqrt{N}}}^{\frac{i}{\sqrt{N}}+1}e^{-xz}f(z) \zeta^{(m-j)} \left (1-\frac{k}{2}, \frac{ix}{2\pi}, z \right ) dz.
\end{multline}
We now use  (8) of Sect. 1.11 of \cite{Er} according to which, for $z \in \mathbb H$, $s \not \in \mathbb N$ and $x>0$ small enough, we have
\begin{equation}\label{Erd} e^{-xz}\zeta \left (s, \frac{ix}{2 \pi}, z \right )=\Gamma(1-s) x^{s-1}+ \sum_{r=0}^{\infty} \zeta(s-r, z)\frac{(-x)^r}{r!},\end{equation}
where $\zeta(s, w)$ is the Hurwitz zeta function. This gives, for every $\ell \in \mathbb N,$
\begin{equation}\label{Erd'} e^{-xz}\zeta^{(\ell)} \left (s, \frac{ix}{2 \pi}, z \right )=\sum_{j=0}^{\ell}(-1)^j 
\Gamma^{(j)}(1-s) x^{s-1} \log^{j}x + \sum_{r=0}^{\infty} \zeta^{(\ell)}(s-r, z) \frac{(-x)^r}{r!}\end{equation}
and thus,
\begin{equation*}\label{Erd0} e^{-xz}\zeta^{(\ell)} \left (1-\frac{k}{2}, \frac{ix}{2 \pi}, z \right )=\sum_{j=0}^{\ell}(-1)^j x^{-k/2}
\Gamma^{(j)}(\frac{k}{2}) \log^{j}x + \sum_{r=0}^{\infty} \zeta^{(\ell)}(1-\frac{k}{2}-r, z) \frac{(-x)^r}{r!}.\end{equation*}
This implies that, for each $j \in \N$, we have
\begin{multline*}
\int_{\frac{i}{\sqrt{N}}}^{\frac{i}{\sqrt{N}}+1}e^{-xz}f(z) \zeta^{(\ell)} \left (1-\frac{k}{2}, \frac{ix}{2\pi}, z \right ) dz =
\left ( \sum_{j=0}^{\ell}(-1)^j\Gamma^{(j)}\left (\frac{k}{2}\right ) x^{-\frac{k}{2}} \log^j x \right )
\int_{\frac{i}{\sqrt{N}}}^{\frac{i}{\sqrt{N}}+1}f(z)dz\\
+\sum_{r=0}^{\infty}\frac{(-x)^r}{r!}\int_{\frac{i}{\sqrt{N}}}^{\frac{i}{\sqrt{N}}+1}f(z)\zeta^{(\ell)}\left ( 1-\frac{k}{2}-r, z\right )dz.
\end{multline*}
Since $f$ has a zero constant term in its Fourier expansion, it follows that 
\begin{equation}\label{transl}
\int_{i/\sqrt{N}}^{i/\sqrt{N}+1}f(z)dz=0.
\end{equation}
Therefore
\begin{equation}
\lim_{x \to 0^+} 
\int_{\frac{i}{\sqrt{N}}}^{\frac{i}{\sqrt{N}}+1}e^{-xz}f(z) \zeta^{(\ell)} \left (1-\frac{k}{2}, \frac{ix}{2\pi}, z \right ) dz =\int_{\frac{i}{\sqrt{N}}}^{\frac{i}{\sqrt{N}}+1}f(z)\zeta^{(\ell)}
\left ( 1-\frac{k}{2}, z\right )dz.
\end{equation}
This, combined with \eqref{derivfin}, proves Theorem \ref{mainwf}.
In the case of weight $2$ it simplifies to
\begin{corollary}\label{cormainwf} For each $f \in S^!_2(N)$ such that $f|_2W_N=f$,
we have
\begin{equation*}
\Lambda'(f, 1)=\sqrt{N} i\int_{\frac{i}{\sqrt{N}}}^{\frac{i}{\sqrt{N}}+1} f(z)\left ( \log(\Gamma(z))+(\log (\sqrt{N})-\pi i/2)z \right ) dz.
\end{equation*}
\end{corollary}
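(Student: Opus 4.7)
\textbf{Proof proposal for Corollary \ref{cormainwf}.} The plan is to specialise Theorem \ref{mainwf} to the case $k=2$, $m=1$ and simplify the resulting expression using classical special values of the Hurwitz zeta function together with the vanishing identity \eqref{transl}.

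First, I would write out Theorem \ref{mainwf} with $k=2$, $m=1$. The prefactor becomes $i^{2m-k/2}N^{k/4}=i\sqrt{N}$, and the sum over $j\in\{0,1\}$ reads
\begin{equation*}
\Lambda'(f,1)=i\sqrt{N}\left[\int_{\frac{i}{\sqrt N}}^{\frac{i}{\sqrt N}+1}f(z)\,\zeta'(0,z)\,dz+\log\!\left(\tfrac{i}{\sqrt N}\right)\int_{\frac{i}{\sqrt N}}^{\frac{i}{\sqrt N}+1}f(z)\,\zeta(0,z)\,dz\right],
\end{equation*}
where the derivative of the Hurwitz zeta is with respect to its first argument. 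The next step is to insert the well-known closed forms $\zeta(0,z)=\tfrac12-z$ and (Lerch's formula) $\zeta'(0,z)=\log\Gamma(z)-\tfrac12\log(2\pi)$, together with the evaluation $\log(i/\sqrt N)=i\pi/2-\log(\sqrt N)$ using the principal branch fixed in Section \ref{Prelim}.

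Substituting these in and grouping terms, the integrand becomes
\begin{equation*}
\log\Gamma(z)+\bigl(\log(\sqrt N)-i\pi/2\bigr)z+C,
\end{equation*}
where $C$ collects all the constant (i.e.\ $z$-independent) contributions coming from $-\tfrac12\log(2\pi)$ and from $\log(i/\sqrt N)\cdot\tfrac12$. The key observation is then that, since $f\in S_2^!(N)$ has Fourier expansion \eqref{FourEx} with vanishing constant term, the identity \eqref{transl} gives $\int_{i/\sqrt N}^{i/\sqrt N+1}f(z)\,dz=0$, so the constant $C$ contributes nothing to the final integral. This leaves exactly the stated formula.

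The only point that needs some care is the bookkeeping of branches and signs in $\log(i/\sqrt N)$ and in Lerch's formula, which must be consistent with the conventions adopted after \eqref{EpAnCont}; apart from that, the argument is essentially a substitution followed by the cancellation provided by \eqref{transl}, so I do not expect a genuine obstacle beyond careful algebra.
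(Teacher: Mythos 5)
Your proposal is correct and follows essentially the same route as the paper: specialise Theorem \ref{mainwf} to $k=2$, $m=1$, insert $\zeta(0,z)=\tfrac12-z$ and Lerch's formula $\zeta'(0,z)=\log\Gamma(z)-\tfrac12\log(2\pi)$, and use \eqref{transl} to discard all $z$-independent terms. The sign bookkeeping $-\log(i/\sqrt N)=\log(\sqrt N)-i\pi/2$ works out exactly as you indicate, so there is no gap.
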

\begin{proof} If $k=2$ and $m=1 $, the formula of the theorem becomes
\begin{equation}\label{wt2}
\Lambda'(f, 1)=\sqrt N i \left (
\log(i/\sqrt{N})\int_{\frac{i}{\sqrt{N}}}^{\frac{i}{\sqrt{N}}+1}f(z) \zeta (0, z) dz+
\int_{\frac{i}{\sqrt{N}}}^{\frac{i}{\sqrt{N}}+1}f(z) \zeta' (0, z) dz
\right ).\end{equation}
The well-known identity $\zeta(0, z) = 1/2 - z$ and \eqref{transl} imply that the first integral equals
$$-\int_{\frac{i}{\sqrt{N}}}^{\frac{i}{\sqrt{N}}+1} f(z)zdz.$$
For the second integral, we combine \eqref{transl} with the identity (see, e.g. (10) of 1.10 of \cite{Er})
$$\zeta'(0, z)=\log(\Gamma(z))-\frac{1}{2}\log (2 \pi).$$
From those formulas for the two integrals we deduce the corollary.
\end{proof}
Finally, we comment on the relation between Th.~\ref{cormain} (applying to holomorphic cusp forms) and Cor.~\ref{cormainwf} (applying to weakly holomorphic ones). Since a holomorphic cusp form is, of course, weakly holomorphic, Cor.~\ref{cormainwf} applies to it too and one might expect the two formulas to agree completely. However, 
the subject of Th.~\ref{cormain} is a different $L$-series from the $\Lambda(f, s)$ appearing in Cor.~\ref{cormainwf}, namely $L_f^*(s)$. They both originate in the more general $\Lambda_f(\varphi)$ but they are not quite the same, $L_f^*(s)$  being simply a ``symmetrised'' version of $\Lambda(f, s).$ This explains why the formulas are identical except for the factor of $2$ in the formula for the central derivative of $L_f^*(s)$.

\section{L-functions associated with cusp forms and their derivatives}\label{class}
The case of classical cusp forms and their L-functions can be accounted for by the same approach. However the setting must be slightly adjusted, ultimately because of the lack of a functional equation for $\Lambda(f, s)$ when $f$ is weakly holomorphic, as discussed in Remark \ref{justifi}.  

Specifically, we let $f$ be a holomorphic cusp form of weight $k$ for $\Gamma_0(N)$ with a Fourier expansion  \begin{equation}\label{FourEx0}
f(z) = \sum_{\substack{n > 0}} a_f(n) e^{2\pi inz},
\end{equation} and such that $$f|_kW_N=f, \qquad  
\text{for $W_N=\sm 0 & -1/\sqrt{N} \\\sqrt{N} & 0 \esm $.}$$
We recall the classical integral expression for the completed $L$-function of $f$:
\begin{equation}\label{completed0}
\begin{aligned}
L^*_f(s):=\left (\frac{\sqrt{N}}{2 \pi}\right )^s \Gamma(s) L_f(s)&=
N^{\frac{s}{2}} \int_{1/\sqrt{N}}^{\infty}f(it)t^{s-1}dt+i^k
N^{\frac{k-s}{2}} \int_{1/\sqrt{N}}^{\infty}f(it)t^{k-1-s}dt \\
&=
\sum_{n>0}a_f(n)E_{1-s}(2 \pi n/\sqrt{N})+i^k
\sum_{n>0}a_f(n)E_{s-k+1}(2 \pi n/\sqrt{N})
\end{aligned}
\end{equation}
We observe that, thanks to \eqref{boundE}, this converges for all $s \in \C.$ The 
completed $L$-function can be recast in terms of the $L$-series formalism of \cite{DR} and the family of test functions given in \eqref{varphInt}. Indeed, if $\re(w) >-\epsilon$, we have,
\begin{multline}\label{completed}
L_f(\ph_s^{w}+i^k\ph_{k-s}^{w})=
N^{\frac{s}{2}}\sum_{n>0}a_f(n)\int_{\frac{1}{\sqrt{N}}}^{\infty}e^{-2 \pi n t-wt }t^{s-1}dt+i^k
N^{\frac{k-s}{2}}\sum_{n>0}a_f(n)\int_{\frac{1}{\sqrt{N}}}^{\infty}e^{-2 \pi n t-wt }t^{k-1-s}dt  \\
=\sum_{n>0}a_f(n)\int_{1}^{\infty}e^{-\frac{(2 \pi n +w)t}{\sqrt{N}} }t^{s-1}dt+i^k
\sum_{n>0}a_f(n)\int_{1}^{\infty}e^{-\frac{(2 \pi n +w)t}{\sqrt{N}}}t^{k-1-s}dt
\end{multline}
As in the previous section (but more easily, since we do not have any terms with $n<0$), the series converges absolutely and uniformly in compact subsets of $\{w \in \HH; \re(w) >-\epsilon\}$, for each fixed $s \in \C$. Hence, comparing with \eqref{completed0}, we see that
$$\lim_{x \to 0^+}L_f(\ph_s^{ix}+i^k\ph_{k-s}^{ix})=L^*_f(s).$$ 
Let now $s \in \R$ and $w \in \HH$ with $\re(w)>-\epsilon$. By Lemma \ref{bend}, followed by a change of variables and \eqref{FourEx0}, the sum \eqref{completed} becomes
\begin{multline}i^{-s}\sum_{n>0}a_f(n)\int_{i}^{i+\infty}e^{\frac{(2 \pi n +w)iz}{\sqrt{N}} }z^{s-1}dz+i^k
i^{s-k}\sum_{n>0}a_f(n)\int_{i}^{i+\infty}e^{\frac{(2 \pi n +w)t}{\sqrt{N}}}t^{k-1-s}dz\\
=
i^{-s}N^{s/2}\int_{i/\sqrt{N}}^{i/\sqrt{N}+\infty}e^{iwz}f(z)z^{s-1}dz+
i^{s}N^{(k-s)/2}\int_{i/\sqrt{N}}^{i/\sqrt{N}+\infty}e^{iwz}f(z)z^{k-1-s}dz.
\end{multline}
This is a 'symmetrised' analogue of \eqref{preform} and therefore, working similarly to the last section, we can deduce the following analogue of Prop. \ref{completedfinwh}:
\begin{proposition}\label{completedfin} Let $f \in S_k(N)$ such that $f|_kW_N=f$. For each $w \in \mathbb H$ with $\re(w)>-\epsilon$ and each $s \in \mathbb R$, we have
$$L_f(\ph_s^{w}+i^k\ph_{2-s}^{w})=\int_{\frac{i}{\sqrt{N}}}^{\frac{i}{\sqrt{N}}+1}e^{iwz}f(z) \left ( i^{-s}N^{\frac{s}{2}}\zeta \left (1-s, \frac{w}{2\pi}, z \right ) +
i^{s}N^{\frac{k-s}{2}}\zeta \left (s-k+1, \frac{w}{2\pi}, z \right )\right ) dz.$$
\end{proposition}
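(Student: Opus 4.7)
The plan is to follow the same pipeline used to derive Proposition \ref{completedfinwh}, but applied to the symmetrised expression in \eqref{completed}, i.e.\ to the pair of series rather than a single one. The starting point is the identity
$$L_f(\ph_s^{w}+i^k\ph_{k-s}^{w})=\sum_{n>0}a_f(n)\int_{1}^{\infty}e^{-\frac{(2\pi n+w)t}{\sqrt{N}}}t^{s-1}dt+i^k\sum_{n>0}a_f(n)\int_{1}^{\infty}e^{-\frac{(2\pi n+w)t}{\sqrt{N}}}t^{k-1-s}dt$$
established just above the proposition. Since $w\in\mathbb H$, the parameter $(2\pi n+w)/\sqrt{N}$ has strictly positive imaginary part for every $n>0$, so the first alternative of Lemma \ref{bend} applies to each inner integral, in both series, for all real $s$.

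First, I would apply Lemma \ref{bend} termwise to bend the contour of integration from $[1,\infty)$ to the ray $[i,i+\infty)$, and then perform the change of variables $z\mapsto\sqrt{N}z$ to rewrite the integrals with lower endpoint $i/\sqrt{N}$. This is exactly the step carried out in the display just below Proposition \ref{completedfin} is referenced, and produces the two pieces $i^{-s}N^{s/2}\int_{i/\sqrt N}^{i/\sqrt N+\infty}e^{iwz}f(z)z^{s-1}dz$ and $i^{s}N^{(k-s)/2}\int_{i/\sqrt N}^{i/\sqrt N+\infty}e^{iwz}f(z)z^{k-1-s}dz$, where interchanging summation and integration is justified by the absolute convergence asserted in the excerpt (using \eqref{boundE} and the polynomial bound on $a_f(n)$ for holomorphic cusp forms, which is easier than the weakly holomorphic case).

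Next, I would exploit the periodicity of $f$ exactly as in the derivation preceding Proposition \ref{completedfinwh}: break each tail integral into unit intervals
$$\int_{\frac{i}{\sqrt{N}}}^{\frac{i}{\sqrt{N}}+\infty}=\sum_{n=0}^{\infty}\int_{\frac{i}{\sqrt{N}}+n}^{\frac{i}{\sqrt{N}}+n+1},$$
substitute $z\mapsto z+n$, and use $f(z+n)=f(z)$ and $e^{iw(z+n)}=e^{iwz}e^{iwn}$. In the first piece this collapses the sum over $n$ into $\sum_{n\ge 0}e^{iwn}(z+n)^{s-1}=\zeta(1-s,w/(2\pi),z)$ and in the second piece into $\sum_{n\ge 0}e^{iwn}(z+n)^{k-1-s}=\zeta(s-k+1,w/(2\pi),z)$, matching the definition of the Lerch zeta function stated in the text. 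Pulling the sums inside the integral from $i/\sqrt N$ to $i/\sqrt N+1$ and adding the two pieces yields the claimed formula.

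The main obstacle is ensuring that every interchange of summation and integration is legitimate on the relevant range. For fixed $s\in\mathbb R$ and $w$ with $\re(w)>-\epsilon$, $\im(w)>0$, the Lerch series converges absolutely and locally uniformly for $z$ in the compact segment $[i/\sqrt N, i/\sqrt N+1]$ because $e^{iwn}=e^{-\im(w)n}e^{i\re(w)n}$ decays exponentially in $n$; combined with the rapid decay of $E_{1-s}((2\pi n+w)/\sqrt N)$ given by \eqref{boundE} and the polynomial bound on $a_f(n)$, Fubini applies without issue. The hypothesis $f|_kW_N=f$ plays no role in the computation itself and is simply carried along, consistently with the role it already plays in Proposition \ref{completedfinwh}; the symmetry it expresses will only become relevant when the proposition is later specialised to the central-value derivative.
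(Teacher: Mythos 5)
Your proposal is correct and follows essentially the same route as the paper: apply Lemma \ref{bend} termwise to the symmetrised sum \eqref{completed}, rescale to move the contour to $[i/\sqrt N, i/\sqrt N+\infty)$, then use periodicity to fold the ray into the unit segment and recognise the Lerch zeta function, exactly as in the derivation of Proposition \ref{completedfinwh}. Your added remarks on the convergence justifications and on the fact that Fricke invariance is not used in the identity itself are accurate and consistent with the paper's (terser) treatment.
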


To pass to derivatives, we let $m$ be a positive integer. Equation \eqref{completed} implies that 
$$
L^{(m)}_f(\ph_s^{w}+i^k\ph_{2-s}^{w})|_{s=\frac{k}{2}}
=(1+i^{2m+k})\sum_{n>0}a_f(n)\int_{1}^{\infty}e^{-\frac{(2 \pi n +w)t}{\sqrt{N}} }t^{\frac{k}{2}-1} \log^m t dt.
$$
which is the analogue of \eqref{derL} and thus, we can work in an entirely analogous way to the last section to obtain
\begin{multline}\label{derivfin} 
(L^*_f)^{(m)}\left (\frac{k}{2}\right )=(i^{2m}+i^k) \left (\frac{\sqrt{N}}{i}\right )^{\frac{k}{2}}\sum_{j=0}^m \binom{m}{j}\log \left (\frac{i}{\sqrt{N}}\right )^j \times \\
\lim_{x \to 0^+}
\int_{\frac{i}{\sqrt{N}}}^{\frac{i}{\sqrt{N}}+1}e^{-xz}f(z) \zeta^{(m-j)} \left (1-\frac{k}{2}, \frac{ix}{2\pi}, z \right ) dz.
\end{multline}
Applying (8) of Sect. 1.11 of \cite{Er} as in the last section implies that this equals
\begin{equation*}
(i^k+i^{2m})  \left (\frac{\sqrt{N}}{i}\right )^{\frac{k}{2}} \sum_{j=0}^m  \binom{m}{j}\log^j \left (\frac{i}{\sqrt{N}}\right ) 
\int_{\frac{i}{\sqrt{N}}}^{\frac{i}{\sqrt{N}}+1}f(z) \zeta^{(m-j)} \left (1-\frac{k}{2}, z \right ) dz.
\end{equation*}
Since $L^*_f(s)=(\sqrt{N}/(2 \pi))^s\Gamma(s)L_f(s),$ this gives:
\begin{theorem}\label{main} Let $m$ be a positive integer. For each $f \in S_k(N)$ such that $f|_kW_N=f$ and $L^{(j)}_f(k/2)=0$ for $j<m$
we have
\begin{equation*}
L^{(m)}_f\left ( \frac{k}{2}\right)=\frac{i^k+i^{2m}}{\left ( \frac{k}{2} - 1 \right)!} (-2 \pi i )^{\frac{k}{2}}\sum_{j=0}^m  \binom{m}{j}\log^j \left (\frac{i}{\sqrt{N}}\right ) 
\int_{\frac{i}{\sqrt{N}}}^{\frac{i}{\sqrt{N}}+1}f(z) \zeta^{(m-j)} \left (1-\frac{k}{2}, z \right ) dz.
\end{equation*}
\end{theorem}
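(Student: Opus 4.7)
The plan is to run the same machinery that produced Theorem \ref{mainwf}, but starting from the symmetrised test function $\varphi_s^{ix}+i^{k}\varphi_{k-s}^{ix}$ rather than $\varphi_s^{ix}$, and then to extract $L_f^{(m)}(k/2)$ from $(L_f^*)^{(m)}(k/2)$ using the vanishing hypothesis on lower derivatives. Concretely, I will start from the identity $\lim_{x\to 0^+}L_f(\varphi_s^{ix}+i^{k}\varphi_{k-s}^{ix})=L_f^*(s)$ established above \eqref{completedfin}, differentiate $m$ times in $s$ under the sum (justified by the absolute/uniform convergence on the relevant compact sets), and evaluate at $s=k/2$. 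The chain rule acting on $\varphi_{k-s}^{ix}$ produces an extra $(-1)^m=i^{2m}$, so the two pieces combine into the common factor $(i^{k}+i^{2m})$ displayed in \eqref{derivfin}.

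Next I will feed Proposition~\ref{completedfin} into the limit, so that $(L_f^*)^{(m)}(k/2)$ becomes
\begin{equation*}
(i^{k}+i^{2m})(\sqrt{N}/i)^{k/2}\sum_{j=0}^{m}\binom{m}{j}\log^{j}\!\bigl(i/\sqrt{N}\bigr)\lim_{x\to 0^+}\int_{i/\sqrt{N}}^{i/\sqrt{N}+1}e^{-xz}f(z)\,\zeta^{(m-j)}\!\bigl(1-k/2,ix/(2\pi),z\bigr)\,dz,
\end{equation*}
which is exactly \eqref{derivfin}. Then, as in Section~\ref{sec:derivatives}, I apply the Erdélyi expansion \eqref{Erd'} (which gives the decomposition into a singular $x^{-k/2}\log^{j}x$ part and an analytic Hurwitz-zeta part), and I invoke the cuspidality vanishing $\int_{i/\sqrt{N}}^{i/\sqrt{N}+1}f(z)\,dz=0$ to kill the singular contribution. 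This yields the replacement of the Lerch zeta function by $\zeta^{(m-j)}(1-k/2,z)$ inside the integral, and gives a clean closed form for $(L_f^*)^{(m)}(k/2)$.

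The final step is to convert $(L_f^*)^{(m)}(k/2)$ into $L_f^{(m)}(k/2)$. From $L_f^*(s)=(\sqrt{N}/(2\pi))^{s}\Gamma(s)L_f(s)$ and the Leibniz rule, all cross-terms in $(L_f^*)^{(m)}(k/2)$ involve $L_f^{(j)}(k/2)$ with $j<m$, hence vanish by hypothesis, so
\begin{equation*}
(L_f^*)^{(m)}(k/2)=\bigl(\sqrt{N}/(2\pi)\bigr)^{k/2}\Gamma(k/2)\,L_f^{(m)}(k/2).
\end{equation*}
Dividing through by $(\sqrt{N}/(2\pi))^{k/2}(k/2-1)!$ and simplifying $(\sqrt{N}/i)^{k/2}(2\pi/\sqrt{N})^{k/2}=(2\pi/i)^{k/2}=(-2\pi i)^{k/2}$ (valid since $k/2\in\mathbb{N}$) produces the stated formula.

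The main obstacle is mostly bookkeeping rather than analysis: keeping track of the two different powers of $i$ that appear from the symmetrisation, from the factor $(\sqrt{N}/i)^{-s}$ in the differentiation, and from the conversion $(\sqrt{N}/i)^{k/2}(2\pi/\sqrt{N})^{k/2}=(-2\pi i)^{k/2}$. The one genuinely substantive point is the justification that $(L_f^*)^{(m)}(k/2)$ reduces to $(\sqrt{N}/(2\pi))^{k/2}\Gamma(k/2)L_f^{(m)}(k/2)$; this is exactly where the hypothesis $L_f^{(j)}(k/2)=0$ for $j<m$ enters, and without it the theorem as stated would fail — one would instead obtain an expression for a linear combination of derivatives.
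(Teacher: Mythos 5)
Your proposal is correct and follows essentially the same route as the paper: symmetrise the test function to $\varphi_s^{ix}+i^k\varphi_{k-s}^{ix}$, differentiate under the sum to reach \eqref{derivfin}, remove the singular part of the Lerch expansion via \eqref{transl}, and then divide out $(\sqrt{N}/(2\pi))^{k/2}\Gamma(k/2)$. You in fact make explicit the one step the paper leaves implicit — that the Leibniz cross-terms in $(L_f^*)^{(m)}(k/2)$ die precisely because of the hypothesis $L_f^{(j)}(k/2)=0$ for $j<m$ — which is a welcome clarification rather than a deviation.
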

Theorem \ref{cormain} follows from this exactly as in Cor. \ref{cormainwf} once we take into account that, if $k=2$ and $f|_2W_N=f$, we automatically have $L_f(1)=0$ by the classical functional equation for $f \in S_2(N).$

\section{Computational and algorithmic aspects\label{sec:comp}}

Consider first the special case of a holomorphic cusp form $f$ of
weight $k=2$ and level $N$, which is invariant under the Fricke
involution $W_{N}$. Suppose that $f$ has a Fourier expansion of the form \eqref{FourEx0}.
It is clear from \eqref{completed0} and symmetry
that the central value $L_{f}^{*}(1)$ is zero and the $r$th central derivative is is zero, if $r$
is even, and 
\[
(L_{f}^*)^{(r)}(1)=2r!\sum_{n>0}a_{f}(n)E_{0}^{r}\left(\frac{2\pi n}{\sqrt{N}}\right),
\]
if $r$ is odd. Here 
\[
E_{s}^{r}(z)=\frac{1}{r!}\int_{1}^{\infty}e^{-zt}(\log t)^{r}t^{-s}dt
\]
is $(-1)^{r}/r!$ times the $r$-th derivative of $E_{s}(z)$ with
respect to $s$. It is initially defined for
 $\Re(z)>0$ and can be extended to $\HH \cup \R_{<0}$ via \eqref{eq:Em_mn_formula} and
  \eqref{eq:E1m} below. Using integration by parts it can be shown that $E_{0}^{r}(z)=\frac{1}{z}E_{1}^{r-1}(z)$,
which leads to the expression 
\begin{equation}
(L_{f}^*)^{(r)}(1)=\frac{\sqrt{N}}{\pi}r!\sum_{n>0}a(n)\frac{1}{n}E_{1}^{r-1}\left(\frac{2\pi n}{\sqrt{N}}\right).\label{eq:Lp_as_E1_sum}
\end{equation}
This expression was first obtained by Buhler, Gross and Zagier in
\cite{MR777279}, where the authors used the following expression
to evaluate $E_{1}^{m}(z)$ for any $m\ge1$ and $z>0$
\begin{equation}
E_{1}^{m}(z)=G_{m+1}=P_{m+1}(-\log z)+\sum_{n\ge1}\frac{(-1)^{n-m-1}}{n^{m+1}n!}z^{n}.\label{eq:E1m}
\end{equation}
Here $P_{r}(x)$ is a polynomial of degree $r$ and if we write $\Gamma(1+z)=\sum_{n\ge0}\gamma_{n}z^{n}$
then 
\[
P_{r}(t)=\sum_{j=0}^{r}\gamma_{r-j}\frac{t^{j}}{j!}.
\]
Extending this method to weights $k\ge4$ and weakly holomorphic modular
forms is immediate. If $f\in S_{k}^{!}(N)$ has Fourier expansion
at infinity of the form \eqref{FourEx}
then the analogue of \eqref{completed0} is \eqref{L(f}.
Upon differentiating \eqref{L(f} $r$ times with respect to $s$ and setting
$s=k/2$ leads to 
\begin{equation}
\Lambda^{(r)}(f,k/2)=r!\sum_{\substack{n \ge -n_0 \\ n \neq 0}}a_{f}(n)E^{r}_{1-k/2}\left(\frac{2\pi n}{\sqrt{N}}\right),\label{eq:Lderweak}
\end{equation}
where we note that for a holomorphic $f$ we have $(L_{f}^*)^{(m)}(k/2)=(1+i^{k+2m})\Lambda^{(m)}(f,k/2)$.
It follows that we need to evaluate $E_{-n}^{r}$ where $n=k/2-1.$
To compare the complexity of these computations with the weight $2$
case we note that Milgram \cite[(2.22)]{MR777276} showed that 
\begin{equation}
E_{-n}^{m}(z)=\frac{\Gamma(n+1)}{z^{n+1}}\left[e^{-z}\sum_{l=0}^{n-m}\frac{z^{l}}{l!}\xi_{l,n}^{m}+\sum_{l=1}^{m}\xi_{0,n}^{l-1}E_{1}^{m-l}(z)\right],\label{eq:Em_mn_formula}
\end{equation}
where where $\xi_{l,n}^{j}$ are constants independent of $z$ and
can be precomputed. Using this together with (\ref{eq:E1m}) it follows
that the computation essentially reduces to that of a finite sum of
polynomials and an infinite rapidly convergent sum. 

It is also worth to mention here that the general algorithm to compute
values and derivatives of Motivic $L$-functions introduced by Dokchitser
in \cite{MR2068888} and implemented in PARI/GP \cite{PARI2}, essentially
reduces to that described above in the case of holomorphic modular
forms. Furthermore, in both \cite{MR777279} and \cite{MR2068888}
the authors make additional use of asymptotic expansions to speed
up computations of $E_{-n}^{m}(z)$ for large $z$. 

\subsection{The new integral formula}

Let $f\in S_{k}^{!}(\Gamma_{0}(N))$ be a weakly holomorphic cusp
form of even integral weight $k$ and that satisfies $f|_{k}W_{N}=f$.
Then Theorem 1.1 implies that 

\[
\Lambda^{(m)}(f,k/2)=i^{2m-k/2}N^{k/4}\sum_{j=0}^{m}{m \choose j}
\log^{j}\left(\frac{i}{\sqrt{N}}\right)\int_{i/\sqrt{N}}^{i/\sqrt{N}+1}f(z)\zeta^{(m-j)}\left(1-\frac{k}{2},z\right)dz,
\]
\[
\]
where $\Lambda(f,s)$ is defined in (\ref{L(fInt)}).
When computing these values it is clear that the main CPU time is
spent on computing integrals of the form 
\[
I_{r}(f)=\int_{0}^{1}f(x+i/\sqrt{N})\zeta^{(r)}\left(1-k/2,x+i/\sqrt{N}\right)dx,\ 0\le r\le m.
\]
The cusp form $f$ is given in terms of the Fourier expansion \eqref{FourEx}
for some $n_{0}\ge0$. To evaluate $f(x+i/\sqrt{N})$ up to a precision
of $\varepsilon=10^{-D}$ for all $x\in[0,1]$ we can truncate the
Fourier series at some integer $M>0$. The precise choice of $M$
depends on the available coefficient bounds. In case $f$ is holomorphic
then Deligne's bound can be used to show that we can choose $M$ such
that 
\[
M > c_{1}k\sqrt{N}\log M +\sqrt{N}(c_{2}D+c_{3}\log(\sqrt{N}(k/2)!))+c_{4}
\]
for some explicit positive constants $c_{1},c_{2},c_{3}$ and $c_{4}$,
independent of $N,D$ and $k$. However, if $f$ is not holomorphic
then we only have the non-explicit bound \eqref{coeffbound} and $M$
must satisfy 
\[
M > c_{1}'\sqrt{N}\sqrt{M} + c_{2}'\sqrt{N}D+c_{3}'\sqrt{N}\log N,
\]
where $c_{1}',$ $c_{2}',$ $c_{3}'$ and $c_{4}'$ are positive constants
that depend on $f$ and can be computed in special cases using Poincaré
series. In both cases we From both inequalities above it is clear 
that as the level or
weight increases we need a larger number of coefficients, which increases
the number of arithmetic operations needed. Note that the working
precision might also need to be increased due to cancellation errors.
To evaluate the Hurwitz zeta function and its derivatives it is possible
to use, for instance, the Euler -- Maclaurin formula 
\begin{equation*}
\zeta(s,z)=\sum_{n=0}^{M-1}\frac{1}{(n+z)^{s}}+\frac{(z+M)^{1-s}}{s-1}+\frac{1}{(z+M)^{s}}\left(\frac{1}{2}+\sum_{l=1}^{L}\frac{B_{2l}}{(2l!)}\frac{(s)_{2l-1}}{(z+M)^{2l-1}}\right)+\text{Err}(M,L)
\end{equation*}
where $M,L\ge1$ and where the error term $\text{Err}(M,L)$ can be
explicitly bounded. For more details, including proof and analysis
of rigorous error bounds and choice of parameters 
see \cite{MR3350381}, where the generalisation
to derivatives $\zeta^{(r)}(s,z)$ is also included. In our case $s=1-k/2$
and $z=x+i/\sqrt{N}$ with $0\le x\le1$. It is easy to use Theorem
1 of \cite{MR3350381} to show that if $M>1$ and $L>k/4$ then 
\[
\left|\text{Err}(M,L)\right|\le\frac{2M^{2k}}{(2\pi M)^{2L}}\frac{|(1-k/2)_{2L}|}{L-k/4},
\]
where $(s)_{m}=s(s+1)\cdots(s+m-1)$ is the usual Pochammer symbol.
Furthermore, if the right-hand side above is denoted by $B$ then
it can be shown that the error in the Euler -- Maclaurin formula
for the $r$th derivative can be bounded by $B\cdot r!\log(8(M+1))^{r}$.
In \cite{MR3350381} it is observed that to obtain $D$ digits of
precision we should choose $M\sim L\sim D,$ meaning that the number
of terms in both sums are proportional to $D$. It is also clear that
as $k$ or $r$ increases we will need larger values of $M$ and $L$. 
\begin{example}
Consider $f\in S_{2}(37)$ and standard double precision, i.e. $53$
bits or $15$ (decimal) digits. Then a single evaluation of $f(x+i/\sqrt{37})$
takes $271\mu s$ while $\zeta^{(r)}(0,x+i/\sqrt{37})$
takes $2\mu s$, $114\mu s$, $124\mu s$,
$171\mu s$ for $r=1,2,3$ and $20$, respectively. 
\end{example}

\subsection{Comments on the implementation}

There are a few simple optimisations that can be applied immediately
to decrease the number of necessary function evaluations. 
\begin{itemize}
\item Replace the sum of integrals by $\int_{0}^{1}f(x+i/\sqrt{N})Z_{m}(x+i/\sqrt{N})dx$,
where 
\[
Z_{m}(z)=\sum_{j=0}^{m}{m \choose j}\log^{j}\left(\frac{i}{\sqrt{N}}\right)\zeta^{(m-j)}\left(1-\frac{k}{2},z\right).
\]
\item If $f(z)$ has real Fourier coefficients then $f(1-x+i/\sqrt{N})=\overline{f(x+i/\sqrt{N})}$,
which is very useful as we can choose the numerical integration
method with nodes that are symmetric with respect to $x=1/2$.
\item If we need to compute $\Lambda^{(r)}(f,k/2)$ for a sequence of $r$s,
then function values of $f$ and lower derivatives $\zeta^{(j)}$
can be cached in each step provided that the we use the same nodes
for the numerical integration. 
\end{itemize}
As the main goal of this paper is to present a new formula and not to present
an optimised efficient algorithm as such, we have implemented all algorithms
in SageMath using the mpmath Python library for the Hurwitz zeta function
evaluations as well as for the numerical integration using Gauss - Legendre quadrature. 
The implementation used to calculate the examples below can be found in a Jupyter notebook
which is available from \cite{gitHubEx}.

\subsection{Examples of holomorphic forms}

To demonstrate the veracity of the formulas in this paper we first
present a comparison of results and indicative timings between the
new formula in this paper and Dokchiter's algorithm in PARI (interfaced
through SageMath). 

Table \ref{tab:lderholo} includes three holomorphic cusp forms $\text{37.2.a.a}$,
$\text{127.4.a.a}$ and $\text{5077.2.a.a}$, labelled according to
the LMFDB \cite{lmfdb}. These are all invariant under the Fricke
involution and it is known that the analytic ranks are $1,$ $2$
and $3$, respectively. The last column gives the difference between the
values computed by Dokchitser's algorithm and the integral formula. 

\begin{table}[!htbp]
\centering
\footnotesize
\begin{tabular}{|r|l|r|l|l|r|l|r|r|}
\toprule 
$N$ & $k$ & \multicolumn{1}{c}{Label} & $r$ & Dokchitser/PARI & Time (ms) & Integral formula & Time (ms) & Error\tabularnewline
\midrule
$37$ & $2$ & 37.2.a.a & $1$ & 0.296238908699801 & 18 & 0.2962389086998011 & 49 &$6\cdot10^{-17}$\tabularnewline
127 & $4$ & 127.4.a.a & $2$ & 7.83323138624802 & 42 & 7.8332313863855996+ & 186 & $1\cdot10^{-10}$\tabularnewline
5077 & $4$ & 5077.2.a.a & $3$ & 117.837959237940 & 212 & 117.83795923792273+ & 2000 & $2\cdot10^{-11}$\tabularnewline
\bottomrule
\end{tabular}
\caption{Central derivatives $(L_{f}^*)^{(r)}(k/2)$ for $f\in S_k(\Gamma_0(N))$ \label{tab:lderholo}}
\end{table}
As the level increases we find that $f(x+i/\sqrt{N})$ oscillates
more and more and it is necessary to increase the degree of the Legendre
polynomials used in the Gauss--Legendre quadrature. The comparison
of timings in Table \ref{tab:lderholo} indicates that our new formula is slower than Dokchitser's
algorithm but it is important to keep in mind the latter is implemented
in the PARI C library and is compiled while our formula is simply implemented
directly in SageMath using the mpmath Python library. 
All CPU times presented below are obtained on a 2GHz Intel Xeon Quad Core 
and we stress that the times should not be taken as absolute performance measures
but simply to provide comparisons between different input and parameter values.

\subsection{Examples of weakly holomorphic modular forms}

To construct weakly modular cusp forms we use the Dedekind eta functions 
\[
\eta(\tau)=q^{\frac{1}{24}}\prod_{n\ge1}\left(1-q^{n}\right).
\]
If we define 
\[
\Delta_{2}^{+}(\tau)=(\eta(\tau)\eta(2\tau))^{8}=q-8q^{2}+12q^{3}+64q^{4}+O(q^{5})
\]
and 
\[
j_{2}^{+}(\tau)=(\eta(\tau)/\eta(2\tau))^{24}+24+2^{12}(\eta(2\tau)/\eta(\tau))^{24}=q^{-1}+4372q+96256q^{2}+1240002q^{3}+O(q^{4})
\]
then it can be shown that $\Delta_{2}^{+}\in S_{8}(\Gamma_{0}(2))$
and $j_{2}^{+}\in S_{0}^{!}(\Gamma_{0}(2))$ are both invariant under
the Fricke involution $W_{2}$.
The following holomorphic and weakly holomorphic modular forms of weight $16$ on
$\Gamma_{0}(2)$ were introduced by Choi and Kim \cite{MR3451920}
to study weakly holomorphic Hecke eigenforms. 
\begin{align*}
f_{16,-2}(\tau)&=\Delta_{2}^{+}(\tau)^{2}=q^{2}-16q^{3}+O(q^{4}) \\
 f_{16,-1}(\tau)&=\Delta_{2}^{+}(\tau)^{2}(j_{2}^{+}(\tau)+16)=q+4204q^{3}+O(q^{4})\\
 f_{16,0}(\tau)&=\Delta_{2}^{+}(\tau)^{2}(j_{2}^{+}(\tau)^{2}+16j_{2}^{+}(\tau)-8576)=1+261120q^{3}+O(q^{4})\\
 f_{16,1}(\tau)&=\Delta_{2}^{+}(\tau)^{2}(j_{2}^{+}(\tau)^{3}+16j_{2}^{+}(\tau)^{2}-12948j_{2}^{+}(\tau)-427328)=q^{-1}+7525650q^{3}+O(q^{4})\\
 f_{16,2}(\tau)&=\Delta_{2}^{+}(\tau)^{2}(j_{2}^{+}(\tau)^{4}+16j_{2}^{+}(\tau)^{3}-17320j_{2}^{+}(\tau)^{2}-593536j_{2}^{+}(\tau)-27188524)\\
   &=q^{-2}+140479808q^{3}+O(q^{4})
\end{align*}
and it is easy to see that all of these functions are also invariant under $W_{2}$.
Furthermore, $f_{16,-2},f_{16,-1}\in S_{16}(\Gamma_{0}(2))$ and $f_{16,1},f_{16,2}\in S_{16}^{!}(\Gamma_{0}(2))$
while $f_{16,0}$ is not cuspidal. 

To check the accuracy of our formula in this setting we first consider the holomorphic cusp forms. 
Observe that the unique newform of level 2 and
weight 16 is
\[
f(\tau)=q-128q^{2}+6252q^{3}+16384q^{4}+90510q^{5}+O(q^{6})=f_{16,-1}-128f_{16,-2}.
\]
Using Dokchitser's algorithm we find that $L_{f}^{*}(8)=0.0526855929956408$
while using the integral formula with 53 bits precision we obtain
\begin{align*}
L_{f_{16,-2}}^{*}(8) & =0.00008045589767063483+6\cdot10^{-20}i,\\
L_{f_{16,-1}}^{*}(8) & =0.06298394789748197609+3\cdot10^{-17}i,
\end{align*}
and 
\[
L_{f_{16,-1}}^{*}(8)-128L_{f_{16,-2}}^{*}(8)=0.05268559299564071785+2\cdot10^{-17}i,
\]
which agrees with the value of $L^{*}_f(8)$ above.

Table \ref{tab:lderweakint} gives the values of $\Lambda^{(r)}(f_{16,i},8)$ for the weakly holomorphic modular forms $f_{16,1}$ and $f_{16,2}$, 
computed using the integral formula with $103$ bits working precision. The table contain an indication of timings as well as a
 heuristic error estimate based on a comparison with the same value computed using $203$ bits precision.

To provide some independent verification of the algorithm in the case of weakly modular forms we also implemented the generalisation of the algorithm from 
\cite{MR777279} using 
\eqref{eq:Lderweak} directly with $E_{1-k/2}^{r}$ evaluated using \eqref{eq:Em_mn_formula} and \eqref{eq:E1m}.
The main obstacle with the algorithm modelled on \cite{MR777279} is that the infinite sum in \eqref{eq:E1m} suffers from catastrophic cancellation for large $z$
unless the working precision is temporarily increased within the sum.
The corresponding values of $\Lambda^{(r)}(f_{16,i},8)$ computed using the algorithm with 103 bits starting precision 
are given in Table \ref{tab:lderweaksum} where we also give the corresponding timings as well as an error estimate based on comparison with values 
in Table \ref{tab:lderweakint}.

\begin{table}[!htbp]
\centering
\footnotesize
\begin{tabular}{|r|r|l|r|c|}
\toprule 
$i$ & $r$ & \multicolumn{1}{c|}{$\Lambda^{(r)}(f_{16,i},8)$}
 & T/ms & Err. \tabularnewline
\midrule
1 & 0  & $-0.2035186511755524285671725692737 +1\cdot10^{-31}$ & $204$ & $6\cdot10^{-30}$ \tabularnewline
1 & 1  & $\hphantom{-}1.1597162067012225517004253561026-0.104294509255933530762675132394i$ & $975$ & $9\cdot10^{-30}$ \tabularnewline
1 & 2  & $-0.3329012203856171470128799683152  -0.109371149169408369683239573058i$ & $1790$ & $7\cdot10^{-30}$  \tabularnewline
2 & 0  & $-1.8934024663352144735029014555039 + 1 \cdot10^{-30}$ & $209$ & $1\cdot10^{-27}$ \tabularnewline
2 & 1  & $\hphantom{-}55.394013302380372465449909213930 -0.000407400426780990354541699709i$ & $996$ & $2\cdot10^{-28}$ \tabularnewline
2 & 2  & $-0.1484917546377626240694524994979+0.000137545862921322355701592298i$ & $1880$ & $1\cdot10^{-28}$ \tabularnewline
\bottomrule
\end{tabular}
\caption{$\Lambda^{(r)}(f_{16,i},8)$ computed using the integral formula with 103 bits precision. \label{tab:lderweakint}}
\end{table}

\begin{table}[!htbp]
\centering
\footnotesize
\begin{tabular}{|r|r|l|r|c|}
\toprule 
$i$ & $r$ & \multicolumn{1}{c|}{$\Lambda^{(r)}(f_{16,i},8)$}  & T/ms & Err.\tabularnewline
\midrule
1 & 0 &  $-0.20351865117555238$ & $10$ & $4\cdot 10^{-17}$ \tabularnewline
1 & 1 &  $\hphantom{-}1.15971620670121522423-0.104294509255934i$ & $11\cdot10^{3}$ & $8\cdot 10^{-15}$\tabularnewline
1 & 2  & $-0.33290122038562486306-0.109371149169408i$ &  $21\cdot10^{3}$& $8\cdot 10^{-15}$ \tabularnewline
2 & 0 & $-1.89340246633520092878$ & $11$ & $2\cdot 10^{-14}$ \tabularnewline
2 & 1  & $\hphantom{-}55.3940133023803440437-0.000407400426780990i$ &  $14\cdot10^{3}$& $4\cdot 10^{-14}$\tabularnewline
2 & 2 & $-0.14849175463777442019+0.000137545862921322i$ & $26\cdot10^{3}$ & $2\cdot 10^{-14}$\tabularnewline
\bottomrule
\end{tabular}
\caption{$\Lambda^{(r)}(f_{16,i},8)$ computed using the sum with 103 bits precision. \label{tab:lderweaksum}}
\end{table}

\end{document}